\newcommand {\Kinf}{\ensuremath{\mathcal{K}_\infty}}
\newcommand {\Real}{\ensuremath{{\mathbb{R}}}}
\newcommand {\Natural}{\ensuremath{{\mathbb{N}}}}
\newcommand{\C}{\ensuremath{\mathcal C}}
\newcommand{\D}{\ensuremath{\mathcal D}}
\newcommand{\setO}{\ensuremath{\mathcal O}}
\newcommand{\U}{\ensuremath{\mathcal U}}
\newcommand{\X}{\ensuremath{\mathcal X}}
\newcommand{\Y}{\ensuremath{\mathcal Y}}
\newcommand{\ex}{\ensuremath{{\mathbf{x}}}}
\newcommand{\ef}{\ensuremath{{\mathbf{f}}}}
\newcommand{\tx}{\tilde x}
\newcommand{\xhat}{\hat x}
\newtheorem{theorem}{Theorem}
\newtheorem{remark}{Remark}
\newtheorem{assumption}{Assumption}
\newenvironment{proof}{\noindent {\bf Proof.}}{\hfill \hspace*{1pt}\hfill$\blacksquare$}
\begin{document}

\title{A dual pair of optimization-based formulations for estimation and control}
\author{S. Emre Tuna\footnote{The author is with Department of
Electrical and Electronics Engineering, Middle East Technical
University, 06800 Ankara, Turkey. Email: {\tt
tuna@eee.metu.edu.tr}}} \maketitle

\begin{abstract}

A finite-horizon optimal estimation problem for discrete-time
linear systems is formulated and solved. The formulation is a
natural extension of that which yields a deadbeat observer. The
resultant observer is the dual of the controller produced by the
finite-horizon minimum energy control problem with terminal
equality constraint. Nonlinear extensions of this dual pair are
also considered and sufficient conditions are provided for
stability and convergence.

\end{abstract}

\section{Introduction}

One of the earliest things that students of control theory are
taught is that for linear systems controllability and
observability are {\em dual} concepts. Very few doubt this because
it is in every linear systems textbook. Interestingly, what is
usually not in all those books is a clear definition of duality
\cite{luenberger92}. A possibility is that no one wants to confine
the notion into the precision required by a definition. Or,
perhaps, it is too obvious a thing to define. Either way, people
do not seem to need its exact description in order to make use of
or enjoy duality; for once a dual pair emerges, the human eye is
very quick to recognize it.

An intriguing example of duality is between the problems of linear
quadratic regulation (LQR) and linear quadratic estimation (LQE,
Kalman-Bucy filter). These celebrated optimization problems, which
are very different conceptually and formulation-wise, yield sets
of parameters (matrices) that are associated via formal rules that
transform one set to another \cite{kalman60}.\footnote[2]{Though
LQR and LQE are acknowledged as a dual pair, nowhere (to the best
of our knowledge) it is mentioned whether duality played much (if
any) role in their discoveries. In other words, there seems to be
no evidence to suggest that the birth of LQE was a consequence of
the pressing fact that LQR must have a {\em twin}.} The problems
of linear deadbeat control and linear deadbeat estimation make
another example of a dual pair. Let us recall the former. Consider
the below systems, both $n$th order,
\begin{eqnarray}
x_{k+1}&=&Ax_{k} \label{eqn:systemA} \\
\xhat_{k+1}&=&A\xhat_{k}+Bu_{k} \label{eqn:systemAB}
\end{eqnarray}
where the system~\eqref{eqn:systemAB} is to track the
system~\eqref{eqn:systemA} by choosing suitable control inputs
$u_{0},\,u_{1},\,\ldots$ (Let us assume for now that the
controllability condition is satisfied, input $u$ is scalar, and
the full state information $(\xhat,\,x)$ of both systems is
available to the controller.) To turn the
system~\eqref{eqn:systemAB} into a deadbeat tracker for the
system~\eqref{eqn:systemA}, i.e., to achieve $\xhat_{k}=x_{k}$ for
$k\geq n$, one can follow either of the below methods.
\begin{itemize}
\item[(M1)] Apply $u_{k}=K(x_{k}-\xhat_{k})$ where the row vector
$K$ is such that all the eigenvalues of $A-BK$ are at the origin.
\item[(M2)] Apply $u_{k}$ from the sequence of inputs
$(u_{k},\,u_{k+1},\,\ldots,\,u_{k+n-1})$ obtained by solving
$\xhat_{k+n}=A^{n}x_{k}$.
\end{itemize}
These methods are mathematically equivalent since, in the end,
they result in the same thing. However, the latter is superior to
the former in the following sense. Firstly, the feedback gain $K$
naturally comes out of the solution of
$A^{n}x_{k}=\xhat_{k+n}=A^{n}\xhat_{k}+A^{n-1}Bu_{k}+\ldots+ABu_{k+n-2}+Bu_{k+n-1}$.
Note that the first method does not give any clues as regards to
the computation of $K$. Secondly, and more importantly, the second
method is meaningful also for nonlinear systems, which is not the
case with the first one.

If we now move to the dual problem, linear deadbeat estimation,
the translation of the first statement~(M1) is well known. It
boils down to something like ``Choose an observer gain (say $L$)
such that all the eigenvalues of the matrix describing the error
dynamics (say $A-LC$) are at the origin.'' However, how to
translate the more valuable second statement~(M2) is not
immediately clear. Motivated by the historical pattern that
beautiful things tend to come in dual pairs for linear systems,
our work here starts with a search for this missing twin of (M2).
In more exact terms, guided by linear duality, we look for some
sort of a principle that not only leads to linear deadbeat
observer but also is useful for nonlinear deadbeat observer
design. This search is nothing but a simple linear algebra
exercise, but its outcome turns out to have some interesting
consequences that go beyond {\em linear} and {\em deadbeat}. Those
consequences are what we report in this paper. In particular,
three things are done:

First. In Theorem~\ref{thm:asyobslin} observer design for linear
systems is formulated as a finite-horizon optimization problem.
The formulation concerns a moving-horizon type observer (whose
order matches that of the system being observed) where at each
time an estimate of the system state is generated based solely on
the current output (instead of a larger collection of data
comprising previous measurements) of the system and the current
observer state. Convergence is guaranteed for all horizon lengths
no smaller than the order of the system being observed.
Interestingly, the formulation presented here turns out to be the
dual of a classic result (Theorem~\ref{thm:asyconlin}) by Kleinman
\cite{kleinman74}, who is acknowledged to be the first to consider
moving-horizon feedback \cite{keerthi88}.

Second. In Theorem~\ref{thm:asyobs} a nonlinear generalization of
the linear optimal observer construction of
Theorem~\ref{thm:asyobslin} is provided, where convergence is
established under certain conditions inspired by those that hold
in the linear problem. The resulting nonlinear moving-horizon
observer, like its above-mentioned linear version, is driven only
by the current output value of the system being observed. This
constitutes a conceptual difference between the construction in
this paper and the majority of the work on moving horizon
estimation \cite{rao03,alessandri08,psiaki13}, the basic
philosophy of which is summarized in \cite{ferrari02} as: {\em The
estimates of the states are obtained by solving a least squares
problem, which penalizes the deviation between measurements and
predicted outputs of a system. The data considered for the
optimization is laying in a window of fixed finite length, which
slides forward in time.}

Third. For the sake of symmetry we present in
Theorem~\ref{thm:asycon} a possible nonlinear extension of
Kleinman's optimal controller (Theorem~\ref{thm:asyconlin}). More
specifically, a moving-horizon optimal tracking problem is
considered, where convergence is established mainly through
terminal equality constraint. We note that more general results,
i.e., ones that do not require terminal equality constraint or
terminal cost, have long existed in the receding horizon control
literature \cite{mayne00,grimm05}.

\section{Notation}

$\Natural$ denotes the set of nonnegative integers and
$\Real_{\geq 0}$ the set of nonnegative real numbers. For a
mapping $f:\X\to\X$ let $f^{0}(x)=x$ and $f^{k+1}(x)=f(f^{k}(x))$.
Euclidean norm in $\Real^{n}$ is denoted by $\|\cdot\|$. For a
symmetric positive definite matrix $Q\in\Real^{n\times n}$ the
smallest and largest eigenvalues of $Q$ are respectively denoted
by $\lambda_{\rm min}(Q)$ and $\lambda_{\rm max}(Q)$. Also,
$\|x\|_{Q}^{2}=x^{T}Qx$. A function $\alpha:\Real_{\geq 0}\to
\Real_{\geq 0}$ is said to belong to class-$\Kinf$
$(\alpha\in\Kinf)$ if it is continuous, zero at zero, strictly
increasing, and unbounded.

\section{An optimal observer}

We begin this section by an attempt to obtain the dual of the
statement (M2), i.e., some method to construct deadbeat observer,
which is meaningful also for nonlinear systems. Consider the
discrete-time linear system
\begin{eqnarray}\label{eqn:systemCA}
x^{+}=Ax\,,\quad y=Cx
\end{eqnarray}
where $x\in\Real^{n}$ is the {\em state}, $y\in\Real^{m}$ is the
{\em output}, and $x^{+}$ is the state at the next time instant.
The matrices $A$ and $C$ belong to $\Real^{n\times n}$ and
$\Real^{m\times n}$, respectively. We will denote the solution of
the system~\eqref{eqn:systemCA} by $x_{k}$ for $k\in\Natural$.
Driven by the output $y$ of the system~\eqref{eqn:systemCA}
suppose that the below system, for $N\geq 1$,
\begin{eqnarray}\label{eqn:preobserver}
z^{+}=A\eta(z,\,y)
\end{eqnarray}
produces at each time $k$ an estimate $z_{k}$ of $x_{k-N+1}$ (the
$N-1$ steps earlier value of the current state $x$) based on
$z_{k-1}$ and $y_{k-1}$. That is, the vector $\eta\in\Real^{n}$ is
a function of the state $z$ and the output $y$. Note that the
system~\eqref{eqn:preobserver} can be used in the following
observer
\begin{eqnarray}\label{eqn:observer}
z^{+}=A\eta\,,\quad \xhat=A^{N-1}z
\end{eqnarray}
where $\xhat$ is the estimate of the current state $x$. Assuming
for now that the system~\eqref{eqn:systemCA} is observable and its
output $y$ is scalar, we now ask the following question. How
should $\eta$ be chosen such that the system~\eqref{eqn:observer}
is a deadbeat observer for the system~\eqref{eqn:systemCA}, i.e.,
$\xhat_{k}=x_{k}$ for $k\geq n$?

To answer the question we recall the deadbeat tracker, the dual of
deadbeat observer. From (M2) it follows that the dynamics of the
deadbeat tracker read
\begin{eqnarray*}
\xhat^{+}=A\xhat+BK(x-\xhat)
\end{eqnarray*}
with the feedback gain
\begin{eqnarray*}
K=e_{n}^{T}\C^{-1}A^{n}\,.
\end{eqnarray*}
where $\C=[B\ AB\ \ldots\ A^{n-1}B]$ is the controllability matrix
and $e_{n}=[0\ \ldots\ 0\ 1]^{T}$. By duality the dynamics of the
deadbeat observer should read
\begin{eqnarray}\label{eqn:robin}
\xhat^{+}=A\xhat+L(y-C\xhat)
\end{eqnarray}
with the observer gain
\begin{eqnarray}\label{eqn:thrush}
L=A^{n}\setO^{-1}e_{n}
\end{eqnarray}
where $\setO=[C^{T}\ A^{T}C^{T}\ \ldots\ A^{(n-1)T}C^{T}]^{T}$ is
the observability matrix. Now, combining \eqref{eqn:observer},
\eqref{eqn:robin}, and \eqref{eqn:thrush} we can write
\begin{eqnarray*}
A^{N}\eta
&=&A^{N-1}z^{+}\\
&=&\xhat^{+}\\
&=&A\xhat+A^{n}\setO^{-1}e_{n}(y-C\xhat)\\
&=&A^{N}z+A^{n}\setO^{-1}e_{n}(y-CA^{N-1}z)\,.
\end{eqnarray*}
If we let $N=n$ we can write
\begin{eqnarray*}
A^{n}\eta =A^{n}(z+\setO^{-1}e_{n}(y-CA^{n-1}z))
\end{eqnarray*}
which suggests we choose $\eta$ as
\begin{eqnarray}\label{eqn:silver}
\eta=z+\setO^{-1}e_{n}(y-CA^{n-1}z)\,.
\end{eqnarray}
Equation~\eqref{eqn:silver} is not directly generalizable to
nonlinear systems so we rewrite it as the following set of
equations
\begin{eqnarray}\label{eqn:golden}
\left.
\begin{array}{rcl}
C\eta&=&Cz\\
CA\eta&=&CAz\\
&\vdots&\\
CA^{n-2}\eta&=&CA^{n-2}z\\
CA^{n-1}\eta&=&y
\end{array}\right\}
\end{eqnarray}
Therefore, to turn the system~\eqref{eqn:observer} (for $N=n$)
into a deadbeat observer for the system~\eqref{eqn:systemCA} one
can use the below algorithm.
\begin{itemize}
\item[(M3)] Choose $\eta_{k}$ such that the would-be future output
values $CA^{i}\eta_{k}$ match the would-be future output values of
the current observer state $CA^{i}z_{k}$ for
$i=0,\,1,\,\ldots,\,n-2$; and the would-be future output value
$CA^{n-1}\eta_{k}$ matches the current measurement $y_{k}$.
\end{itemize}

The statement (M3) seems to be the dual of (M2). Happily, it
serves our purpose in the sense that it allows one to construct
nonlinear deadbeat observers. The formal treatment of the case is
as follows.

Consider the system
\begin{eqnarray}\label{eqn:nonlinear}
x^{+}=f(x)\,,\quad y=h(x)
\end{eqnarray}
with $f:\X\to\X$ and $h:\X\to\Y$. Now consider the observer system
\begin{eqnarray}\label{eqn:nlobserver}
z^{+}=f(\eta)\,,\quad \xhat=f^{N-1}(z)
\end{eqnarray}
for some integer $N\geq 1$.

\begin{assumption}\label{assume:dbobs}
For each $\xi\in\Y^{N}$ the equation
\begin{eqnarray*}
\left[\begin{array}{c}h(\eta)\\h(f(\eta))\\\vdots\\h(f^{N-1}(\eta))\end{array}\right]=\xi
\end{eqnarray*}
has a unique solution $\eta\in\X$.
\end{assumption}
Note that when the system~\eqref{eqn:nonlinear} is linear with
scalar output, Assumption~\ref{assume:dbobs} (with $N=n$) is
equivalent to observability. The linear statement (M3) leads to
the following result by Glad \cite{glad83}. For a geometric
interpretation see \cite{tuna12}.

\begin{theorem}\label{thm:db}
Consider the system~\eqref{eqn:nonlinear} and the
observer~\eqref{eqn:nlobserver}. Suppose
Assumption~\ref{assume:dbobs} holds and let $\eta$ be chosen to
satisfy
\begin{eqnarray*}
h(\eta)&=&h(z)\\
h(f(\eta))&=&h(f(z))\\
&\vdots&\\
h(f^{N-2}(\eta))&=&h(f^{N-2}(z))\\
h(f^{N-1}(\eta))&=&y\,.
\end{eqnarray*}
Then, for all initial conditions, $\xhat_{k}=x_{k}$ for all $k\geq
N$.
\end{theorem}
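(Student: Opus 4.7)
The plan is to exploit the shift structure hidden in the defining conditions for $\eta$ and then appeal to the injectivity supplied by Assumption~\ref{assume:dbobs}. First I would introduce the shorthand $\Phi(x) := (h(x), h(f(x)), \ldots, h(f^{N-1}(x)))$, so that the assumption says precisely that $\Phi$ is injective on $\X$, and the equations defining the time-$k$ choice $\eta_{k}$ read $\Phi(\eta_{k}) = (h(z_{k}), h(f(z_{k})), \ldots, h(f^{N-2}(z_{k})), y_{k})$.

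Second, set $a_{k,i} := h(f^{i}(z_{k}))$. Because $z_{k+1} = f(\eta_{k})$, one has $a_{k+1,i} = h(f^{i+1}(\eta_{k}))$, and the defining equations of $\eta_{k}$ then yield the shift recursion $a_{k+1,i} = a_{k,i+1}$ for $i = 0, \ldots, N-3$ together with $a_{k+1,N-2} = y_{k}$. A short iteration of this recursion shows that for each $k \geq N-1$ and each $i \in \{0,\ldots,N-2\}$ one has $a_{k,i} = y_{k-N+1+i}$, i.e., the measurement from $N-1-i$ steps earlier has been shifted into slot $i$.

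Third, I would apply this at the critical index $k = N-1$: the tuple defining $\eta_{N-1}$ becomes $(y_{0}, y_{1}, \ldots, y_{N-1})$, and since $y_{j} = h(f^{j}(x_{0}))$ this is exactly $\Phi(x_{0})$. The uniqueness in Assumption~\ref{assume:dbobs} forces $\eta_{N-1} = x_{0}$, hence $z_{N} = f(x_{0}) = x_{1}$ and $\xhat_{N} = f^{N-1}(z_{N}) = x_{N}$, establishing the base case.

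Finally, a one-line induction closes the argument: if $z_{k} = x_{k-N+1}$ for some $k \geq N$, then $a_{k,i} = h(f^{i}(x_{k-N+1})) = y_{k-N+1+i}$ for every $i$, so the tuple defining $\eta_{k}$ equals $\Phi(x_{k-N+1})$, and injectivity gives $\eta_{k} = x_{k-N+1}$, whence $z_{k+1} = x_{k-N+2}$ and $\xhat_{k+1} = x_{k+1}$. The only real obstacle is the combinatorial bookkeeping needed to see that once $k \geq N-1$ the tuple $(a_{k,0}, \ldots, a_{k,N-2}, y_{k})$ consists of exactly $N$ consecutive genuine measurements $(y_{k-N+1}, \ldots, y_{k})$; once this is in place, the injectivity of $\Phi$ does all the real conceptual work and the deadbeat conclusion is immediate.
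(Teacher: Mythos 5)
Your proposal is correct and follows essentially the same route as the paper: both arguments establish by induction the shift property that after $N-1$ steps the tuple $(h(\eta_k),\ldots,h(f^{N-1}(\eta_k)))$ equals the genuine measurement window $(y_{k-N+1},\ldots,y_k)=\Phi(x_{k-N+1})$, and then invoke the uniqueness in Assumption~\ref{assume:dbobs} to conclude $\eta_k=x_{k-N+1}$. Your bookkeeping in terms of $a_{k,i}=h(f^i(z_k))$ is just a reindexed version of the paper's induction on $h(f^{N-q}(\eta_k))=y_{k-q+1}$, and your closing induction on $z_k=x_{k-N+1}$ is harmless but redundant once the shift recursion holds for all $k\geq N-1$.
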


\begin{proof}
The result follows trivially for $N=1$. Suppose now $N\geq 2$ and
for some $p\in\{1,\,2,\,\ldots,\,N-1\}$ and some $k\geq 0$ we have
\begin{eqnarray}\label{eqn:seed}
h(f^{N-q}(\eta_{k}))=y_{k-q+1}\quad\forall
q\in\{1,\,2,\,\ldots,\,p\}\,.
\end{eqnarray}
Then we can write
\begin{eqnarray*}
h(f^{N-q-1}(\eta_{k+1}))
&=& h(f^{N-q-1}(z_{k+1}))\nonumber\\
&=& h(f^{N-q-1}(f(\eta_{k})))\nonumber\\
&=& h(f^{N-q}(\eta_{k}))\nonumber\\
&=& y_{k-q+1}\,.
\end{eqnarray*}
Also, $h(f^{N-1}(\eta_{k+1}))=y_{k+1}$ holds by definition. Hence
\eqref{eqn:seed} implies
\begin{eqnarray*}
h(f^{N-q}(\eta_{k+1}))=y_{(k+1)-q+1}\quad\forall
q\in\{1,\,2,\,\ldots,\,p+1\}\,.
\end{eqnarray*}
Now, \eqref{eqn:seed} holds at time $k=0$ for $p=1$. By induction
therefore we can write
\begin{eqnarray*}
\left[\begin{array}{c}
h(\eta_{k})\\
\vdots\\
h(f^{N-2}(\eta_{k}))\\
h(f^{N-1}(\eta_{k}))
\end{array}\right]
= \left[\begin{array}{c}
y_{k-N+1}\\
\vdots\\
y_{k-1}\\
y_{k}
\end{array}\right]
= \left[\begin{array}{c}
h(x_{k-N+1})\\
\vdots\\
h(f^{N-2}(x_{k-N+1}))\\
h(f^{N-1}(x_{k-N+1}))
\end{array}\right]
\end{eqnarray*}
for all $k\geq N-1$. Then by Assumption~\ref{assume:dbobs} we have
$\eta_{k}=x_{k-N+1}$ for all $k\geq N-1$. The result follows since
$\xhat_{k+1}=f^{N}(\eta_{k})$.
\end{proof}
\medskip\smallskip

As Theorem~\ref{thm:db} depicted, the rationale behind the set of
linear equations~\eqref{eqn:golden} allows us to construct a
nonlinear deadbeat observer. What else can we get out of
\eqref{eqn:golden}? Now we attempt to answer this question.

Let us once again consider the system~\eqref{eqn:systemCA}
together with the observer~\eqref{eqn:observer} and write the
general version of \eqref{eqn:golden}
\begin{eqnarray}\label{eqn:golden2}
\left.
\begin{array}{rcl}
C\eta&=&Cz\\
CA\eta&=&CAz\\
&\vdots&\\
CA^{N-2}\eta&=&CA^{N-2}z\\
CA^{N-1}\eta&=&y
\end{array}\right\}
\end{eqnarray}
where $N$ need not equal the order of the
system~\eqref{eqn:systemCA}. Suppose now the set of
equations~\eqref{eqn:golden2} is overdetermined and does not admit
a solution $\eta$. How to choose $\eta$ then? Any textbook on
linear algebra would suggest the least squares approximation,
which leads to the following result.

\begin{theorem}\label{thm:asyobslin}
Consider the system~\eqref{eqn:systemCA}. Let $N\geq 1$ be such
that the matrix $[C^{T}\ A^{T}C^{T}\ \ldots\ A^{(N-1)T}C^{T}]$ is
full row rank. Let $R\in\Real^{m\times m}$ be a symmetric positive
definite matrix and consider the observer~\eqref{eqn:observer}
with $\eta=\arg\min_{\xi}J(\xi,\,z,\,y)$ where
\begin{eqnarray}\label{eqn:golden2optim}
J(\xi,\,z,\,y):=
\|CA^{N-1}\xi-y\|_{R}^{2}+\sum_{i=0}^{N-2}\|CA^{i}\xi-CA^{i}z\|_{R}^{2}\,.
\end{eqnarray}
Then $\|\xhat_{k}-x_{k}\|\to 0$ as $k\to\infty$.
\end{theorem}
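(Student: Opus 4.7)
My plan is to reduce the claim to Theorem~\ref{thm:asyconlin} via duality. First I will solve the strictly convex quadratic optimization in closed form: because the hypothesis makes the Hessian $W_{\ss N} := \sum_{i=0}^{N-1}(A^i)^T C^T R C A^i$ positive definite, setting $\nabla_\xi J = 0$ gives the unique minimizer
\[
\eta \;=\; z + W_{\ss N}^{-1}(A^{N-1})^T C^T R\bigl(y - C A^{N-1} z\bigr).
\]
Thus the observer admits a Luenberger-like form $z^+ = A z + L\bigl(y - CA^{N-1} z\bigr)$ with $L := A W_{\ss N}^{-1}(A^{N-1})^T C^T R$, and $\hat x = A^{N-1} z$.

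Next I introduce the internal error $e_k := z_k - x_{k-N+1}$, so that $\hat x_k - x_k = A^{N-1} e_k =: \hat e_k$. Substituting $y_k = C A^{N-1} x_{k-N+1}$ into the observer recursion yields $e_{k+1} = M e_k$ with $M := A - L C A^{N-1}$. A direct computation then establishes the intertwining identity $A^{N-1} M = \hat M \, A^{N-1}$, where
\[
\hat M \;:=\; A - A^N W_{\ss N}^{-1}(A^{N-1})^T C^T R \, C,
\]
so that $\hat e_{k+1} = \hat M \hat e_k$. Proving convergence therefore reduces to showing that $\hat M$ has spectral radius strictly less than one.

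Schurness of $\hat M$ I obtain by taking the transpose and invoking Theorem~\ref{thm:asyconlin}. The transpose reads
\[
\hat M^T \;=\; A^T - C^T R \, C A^{N-1} W_{\ss N}^{-1}(A^T)^N,
\]
which I recognize as the closed-loop matrix produced by Kleinman's moving-horizon minimum-energy controller applied to the auxiliary system $\bar x^+ = A^T \bar x + C^T R^{1/2}\,\bar u$. For that system, the $N$-step controllability gramian $\sum_{j=0}^{N-1}(A^T)^j C^T R C A^j$ is exactly $W_{\ss N}$, and the required $N$-step controllability of $(A^T,\,C^T R^{1/2})$ is, after cancelling the invertible factor $R^{1/2}$, precisely the full row rank assumption of the theorem. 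Theorem~\ref{thm:asyconlin} then asserts that this closed-loop matrix is Schur; hence so is $\hat M$, and $\|\hat x_k - x_k\| = \|\hat M^k \hat e_0\|\to 0$.

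The main obstacle will be the algebraic pattern-matching in the last step: one must verify that the LS-derived feedback in $\hat M$ coincides, up to transpose and the absorption of $R^{1/2}$ into the dual input matrix, with the gain prescribed by Theorem~\ref{thm:asyconlin}, so that the two closed-loop matrices agree on the nose. All other ingredients—solving the quadratic, deriving the error recursion, and the intertwining with $A^{N-1}$—are routine bookkeeping.
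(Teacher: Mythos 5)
Your algebra is sound: the closed form for $\eta$ agrees with the paper's \eqref{eqn:eta}--\eqref{eqn:forL}, the error recursion and the intertwining $A^{N-1}M=\hat M A^{N-1}$ check out, and your $\hat M$ is exactly $A-LC$ with $L$ as in \eqref{eqn:optimalL}, whose transpose is indeed the Kleinman closed-loop matrix for the dual data $(A^{T},\,C^{T})$ with weight $R$. The problem is the direction of the reduction. In this paper Theorem~\ref{thm:asyconlin} is not an independent ingredient: its entire proof reads ``convergence then follows from Theorem~\ref{thm:asyobslin} and \eqref{eqn:optimalL}'' by duality. So invoking Theorem~\ref{thm:asyconlin} to establish that $\hat M$ is Schur is circular within the paper --- you would be deducing Theorem~\ref{thm:asyobslin} from its own dual restatement. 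Your argument becomes a complete proof only if you import a self-contained proof of the minimum-energy result from \cite{kleinman74} (or reprove it, e.g.\ by a Lyapunov argument on the $N$-step controllability Gramian); as written, the pivotal step ``hence $\hat M$ is Schur'' has no independent support.

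For contrast, the paper proves the statement directly and without any spectral argument: from \eqref{eqn:eta} it derives the dissipation identity \eqref{eqn:aramis} and hence the Lyapunov-type inequality \eqref{eqn:lyap} for the possibly \emph{singular} weight $Q=\sum_{i=0}^{N-2}A^{iT}C^{T}RCA^{i}$; summability of $J(\eta_{k},z_{k},y_{k})$ then gives $J\to 0$, and a separate inductive claim \eqref{eqn:claim} converts smallness of $J$ over a window of length $N$ into smallness of $\|\xhat_{k}-x_{k}\|$. That route is what carries over to the nonlinear Theorem~\ref{thm:asyobs}, whereas an eigenvalue argument cannot. To salvage your approach you must either (i) supply an independent proof of Theorem~\ref{thm:asyconlin}, or (ii) prove $\rho(\hat M)<1$ directly, e.g.\ by exhibiting $W_{\ss N}$ or the paper's $Q$ as a (possibly degenerate) Lyapunov certificate --- which essentially reconstructs the paper's own argument.
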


\begin{proof}
Let us define the symmetric matrices
\begin{eqnarray*}
Q&:=&C^{T}RC+A^{T}C^{T}RCA+\ldots+A^{(N-2)T}C^{T}RCA^{N-2}\\
H&:=&A^{(N-1)T}C^{T}RCA^{N-1}\,.
\end{eqnarray*}
Note that by rank assumption the matrix $Q+H$ is nonsingular.
Solving $\partial J/\partial\xi=0$ we obtain
\begin{eqnarray}\label{eqn:eta}
\eta=(Q+H)^{-1}A^{(N-1)T}C^{T}Ry+(Q+H)^{-1}Qz\,.
\end{eqnarray}
Let us define the shorthand notation ${\tilde x}_{k}:=x_{k-N+1}$
for $k\geq N-1$. Then we have $x=A^{N-1}{\tilde x}$ and
$y=CA^{N-1}{\tilde x}$. We can now rewrite \eqref{eqn:eta} as
\begin{eqnarray}\label{eqn:forL}
\eta&=&(Q+H)^{-1}H{\tilde x}+(Q+H)^{-1}Qz\nonumber\\
&=&z+(Q+H)^{-1}H(\tx-z)\\
&=&\tx+(Q+H)^{-1}Q(z-\tx)\,.\nonumber
\end{eqnarray}
Then
\begin{eqnarray}
\eta-{\tilde x}&=&(Q+H)^{-1}Q(z-{\tilde x})\label{eqn:etaminusxtilde}\\
\eta-z&=&(Q+H)^{-1}H({\tilde x}-z)\nonumber
\end{eqnarray}
and we can write
\begin{eqnarray}\label{eqn:laurel}
J(\eta,\,z,\,y)&=&(\eta-{\tilde x})^{T}H(\eta-{\tilde
x})+(\eta-z)^{T}Q(\eta-z)\nonumber\\
&=&(z-{\tilde x})^{T}Q(Q+H)^{-1}H(Q+H)^{-1}Q(z-{\tilde
x})\nonumber\\
&&+(z-{\tilde x})^{T}H(Q+H)^{-1}Q(Q+H)^{-1}H(z-{\tilde x})\,.
\end{eqnarray}
Note that
\begin{eqnarray}\label{eqn:hardy}
Q(Q+H)^{-1}H
&=&(Q+H-H)(Q+H)^{-1}(Q+H-Q) \nonumber\\
&=&H(Q+H)^{-1}Q\,.
\end{eqnarray}
Combining \eqref{eqn:laurel} and \eqref{eqn:hardy} we can write
\begin{eqnarray}\label{eqn:atos}
J(\eta,\,z,\,y)&=&(z-{\tilde
x})^{T}Q(Q+H)^{-1}H(Q+H)^{-1}Q(z-{\tilde
x})\nonumber\\
&&+(z-{\tilde x})^{T}H(Q+H)^{-1}H(Q+H)^{-1}Q(z-{\tilde x})\nonumber\\
&=&(z-{\tilde
x})^{T}(Q(Q+H)^{-1}+H(Q+H)^{-1})H(Q+H)^{-1}Q(z-{\tilde x})\nonumber\\
&=&(z-{\tilde
x})^{T}(Q+H)(Q+H)^{-1}H(Q+H)^{-1}Q(z-{\tilde x})\nonumber\\
&=&(z-{\tilde x})^{T}H(Q+H)^{-1}Q(z-{\tilde x})\,.
\end{eqnarray}
Now, by \eqref{eqn:etaminusxtilde} we can write
\begin{eqnarray}\label{eqn:portos}
(\eta-\tx)^{T}(Q+H)(\eta-\tx)&=&(z-\tx)^{T}Q(Q+H)^{-1}Q(z-\tx)\,.
\end{eqnarray}
Then, by \eqref{eqn:atos} and \eqref{eqn:portos} we have
\begin{eqnarray}\label{eqn:aramis}
J(\eta,\,z,\,y)+(\eta-\tx)^{T}(Q+H)(\eta-\tx)=(z-\tx)^{T}Q(z-\tx)\,.
\end{eqnarray}
Since
\begin{eqnarray*}
A^{T}QA&=&Q+H-C^{T}RC\\
&\leq&Q+H
\end{eqnarray*}
we can write by \eqref{eqn:aramis}
\begin{eqnarray}\label{eqn:lyap}
(z^{+}-\tx^{+})^{T}Q(z^{+}-\tx^{+})
&=&(\eta-\tx)^{T}A^{T}QA(\eta-\tx) \nonumber\\
&\leq&(\eta-\tx)^{T}(Q+H)(\eta-\tx)\nonumber\\
&=&(z-\tx)^{T}Q(z-\tx)-J(\eta,\,z,\,y)\,.
\end{eqnarray}
Note that \eqref{eqn:lyap} could serve as a Lyapunov inequality if
$Q$ were positive definite, which we do not assume. Still,
\eqref{eqn:lyap} is whence we extract stability. First we need to
demonstrate the following.

{\em Claim:} For each $\varepsilon\geq 0$ there exists $\delta\geq
0$ such that for all $k_{1}\in\Natural$
\begin{eqnarray}\label{eqn:claim}
J(\eta_{k},\,z_{k},\,y_{k})\leq\delta\quad \forall k\geq
k_{1}\implies\|\xhat_{k}-x_{k}\|\leq\varepsilon\quad \forall k\geq
k_{1}+N\,.
\end{eqnarray}
We prove this claim as follows. Let us for some $\delta$ and
$k_{1}$ have $J(\eta_{k},\,z_{k},\,y_{k})\leq\delta$ for all
$k\geq k_{1}$, which by \eqref{eqn:golden2optim} implies
\begin{eqnarray*}
\max\
\{\|CA^{N-1}\eta_{k}-y_{k}\|,\,\|CA^{N-2}(\eta_{k}-z_{k})\|,\,\ldots,\,\|C(\eta_{k}-z_{k})\|\}\leq\delta_1
\end{eqnarray*}
with $\delta_{1}=\sqrt{\delta/\lambda_{\rm min}(R)}$. The claim is
evident for $N=1$. Consider now $N\geq 2$ and suppose for some
$p\in\{1,\,2,\,\ldots,\,N-1\}$ and some $k_{p}\geq k_{1}$ we have
\begin{eqnarray}\label{eqn:claimproof1}
\|CA^{N-q}\eta_{k}-y_{k-q+1}\|\leq q\delta_{1} \quad\forall
q\in\{1,\,2,\,\ldots,\,p\}\quad \forall k\geq k_{p}\,.
\end{eqnarray}
Then we can write for $q\in\{1,\,2,\,\ldots,\,p\}$ and $k\geq
k_{p}$
\begin{eqnarray*}
\|CA^{N-(q+1)}\eta_{k+1}-y_{(k+1)-(q+1)+1}\|&\leq&
\|CA^{N-q-1}(\eta_{k+1}-z_{k+1})\|\\
&&+\|CA^{N-q-1}z_{k+1}-y_{k-q+1}\|\\
&=&
\|CA^{N-q-1}(\eta_{k+1}-z_{k+1})\|\\&&+\|CA^{N-q}\eta_{k}-y_{k-q+1}\|\\
&\leq&\delta_{1}+q\delta_{1}\\
&=&(q+1)\delta_{1}
\end{eqnarray*}
which allows us to assert
\begin{eqnarray*}
\|CA^{N-q}\eta_{k}-y_{k-q+1}\|\leq q\delta_{1} \quad\forall
q\in\{1,\,2,\,\ldots,\,p+1\}\quad\forall k\geq k_{p}+1\,.
\end{eqnarray*}
Since \eqref{eqn:claimproof1} holds with $p=1$, by induction we
can write
\begin{eqnarray}\label{eqn:claimproof2}
\|CA^{N-q}\eta_{k}-y_{k-q+1}\|\leq q\delta_{1} \quad\forall
q\in\{1,\,2,\,\ldots,\,N\}\quad\forall k\geq k_{1}+N-1\,.
\end{eqnarray}
Define the matrix $W:=[C^{T}\ A^{T}C^{T}\ \ldots\
A^{(N-1)T}C^{T}]^{T}$. Now by \eqref{eqn:claimproof2} we can write
\begin{eqnarray*}
\|\xhat_{k+1}-x_{k+1}\|^{2}
&=&\|A^{N}(\eta_{k}-\tx_{k})\|^{2}\\
&\leq&\lambda_{\max}(A^{NT}A^{N})\|(\eta_{k}-\tx_{k})\|^{2}\\
&\leq&\lambda_{\max}(A^{NT}A^{N})\lambda_{\min}^{-1}(W^{T}W)\|W(\eta_{k}-\tx_{k})\|^{2}\\
&=&\lambda_{\max}(A^{NT}A^{N})\lambda_{\min}^{-1}(W^{T}W)\sum_{q=1}^{N}\|CA^{N-q}\eta_{k}-y_{k-q+1}\|^{2}\\
&\leq&\lambda_{\max}(A^{NT}A^{N})\lambda_{\min}^{-1}(W^{T}W)\delta_{1}^{2}\sum_{q=1}^{N}q^{2}\\
&=&\frac{(2N^{3}+3N^{2}+N)\lambda_{\max}(A^{NT}A^{N})\delta}{6\lambda_{\min}(W^{T}W)\lambda_{\min}(R)}\,.
\end{eqnarray*}
This proves our claim because given any $\varepsilon$, we can
choose
\begin{eqnarray*}
\delta\leq\frac{6\lambda_{\min}(W^{T}W)\lambda_{\min}(R)\varepsilon^{2}}{(2N^{3}+3N^{2}+N)
\lambda_{\max}(A^{NT}A^{N})}
\end{eqnarray*}
to satisfy \eqref{eqn:claim}. Now we return to the proof of the
theorem. Observe that the inequality \eqref{eqn:lyap} implies that
the sum $\sum_{k=0}^{\infty}J(\eta_{k},\,z_{k},\,y_{k})$ is
bounded. Since the terms being summed are all nonnegative we must
have $J(\eta_{k},\,z_{k},\,y_{k})\to 0$ as $k\to\infty$, which by
\eqref{eqn:claim} yields $\|\xhat_{k}-x_{k}\|\to 0$ as
$k\to\infty$.
\end{proof}
\medskip\smallskip

We note that the optimal observer coming out of the formulation
depicted in Theorem~\ref{thm:asyobslin} enjoys the classic linear
observer structure $\xhat^{+}=A\xhat+L(y-C\xhat)$ with the
observer gain
\begin{eqnarray}\label{eqn:optimalL}
L=A^{N}(C^{T}RC+\ldots+A^{(N-1)T}C^{T}RCA^{N-1})^{-1}A^{(N-1)T}C^{T}R
\end{eqnarray}
following from \eqref{eqn:observer} and \eqref{eqn:forL}.
Theorem~\ref{thm:asyobslin} then implies that the eigenvalues of
the matrix $A-LC$ (the system matrix of the error dynamics) must
all be within the open unit disc.

\section{An optimal tracker}

The previous section started with a search for the principle behind
deadbeat observer. Our search was driven by the question {\em what
method would lead to the observer gain given in \eqref{eqn:thrush}},
where the gain \eqref{eqn:thrush} was obtained by duality from the
feedback gain of the deadbeat tracker. In this section we will employ
duality once again, this time however in the other direction. In
particular, we ask the following question. What is the dual of the
optimal observer described in Theorem~\ref{thm:asyobslin}? Or, more
directly, what is the optimization problem that leads to the following
feedback gain?
\begin{eqnarray}\label{eqn:optimalK}
K=RB^{T}A^{(N-1)T}(BRB^{T}+\ldots+A^{N-1}BRB^{T}A^{(N-1)T})^{-1}A^{N}
\end{eqnarray}
which we obtain from \eqref{eqn:optimalL} by duality. The answer
is the below result by Kleinman \cite{kleinman74}, which is
sometimes called the minimum energy control problem.

\begin{theorem}\label{thm:asyconlin}
Consider the system~\eqref{eqn:systemA} and the
tracker~\eqref{eqn:systemAB} with $A\in\Real^{n\times n}$ and
$B\in\Real^{n\times m}$. Let $N\geq 1$ be such that the matrix
$[B\ AB\ \ldots\ A^{N-1}B]$ is full row rank and let
$R\in\Real^{m\times m}$ be a symmetric positive definite matrix.
Let the control input of the tracker be $u=v_{0}(\xhat,\,x)$ where
$v_{0}$ is the first term of the sequence
$(v_{0},\,v_{1},\,\ldots,\,v_{N-1})$ satisfying
\begin{eqnarray*}
(v_{i})_{i=0}^{N-1}=\arg\min_{(w_{i})_{i=0}^{N-1}}\sum_{i=0}^{N-1}\|w_{i}\|_{R^{-1}}^{2}\quad\mbox{subject
to}\quad
\left\{\begin{array}{rcl}z_{0}&=&\xhat\\z_{i+1}&=&Az_{i}+Bw_{i}\\z_{N}&=&A^{N}x\end{array}\right.
\end{eqnarray*}
Then $\|\xhat_{k}-x_{k}\|\to 0$ as $k\to\infty$.
\end{theorem}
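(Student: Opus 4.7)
The plan is to reduce the theorem to a spectral statement about a single closed-loop matrix and then to borrow that spectral statement from Theorem~\ref{thm:asyobslin} by direct duality. First I would solve the minimum-energy terminal-constrained quadratic program in closed form. Iterating $z_{i+1}=Az_{i}+Bw_{i}$ from $z_{0}=\xhat$ yields $z_{N}=A^{N}\xhat+\sum_{i=0}^{N-1}A^{N-1-i}Bw_{i}$, so the terminal constraint collapses to a single linear equation $M\mathbf{w}=A^{N}(x-\xhat)$, where $M:=[A^{N-1}B\ \cdots\ AB\ B]$ and $\mathbf{w}$ stacks $w_{0},\ldots,w_{N-1}$. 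The rank hypothesis on $[B\ AB\ \cdots\ A^{N-1}B]$ is exactly surjectivity of $M$, and a standard Lagrange-multiplier computation then produces the minimum weighted-norm solution; reading off its leading block gives $v_{0}=K(x-\xhat)$ with $K$ precisely the gain displayed in \eqref{eqn:optimalK}. The closed-loop tracker is therefore $\xhat^{+}=A\xhat+BK(x-\xhat)$, and the tracking error $e:=x-\xhat$ obeys the linear recursion $e^{+}=(A-BK)e$.

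What remains is to show that every eigenvalue of $A-BK$ lies strictly inside the unit disc. Here I would apply Theorem~\ref{thm:asyobslin} to the dual data $(\bar A,\bar C,\bar R):=(A^{T},B^{T},R)$. The rank hypothesis on $[B\ AB\ \cdots\ A^{N-1}B]$ is exactly the rank hypothesis required there, and the observer gain produced by \eqref{eqn:optimalL} for this dual instance works out, after a short calculation, to $\bar L=K^{T}$. The remark immediately following Theorem~\ref{thm:asyobslin} asserts that $\bar A-\bar L\bar C$ is Schur; but $\bar A-\bar L\bar C=A^{T}-K^{T}B^{T}=(A-BK)^{T}$, so $A-BK$ is Schur as well. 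Hence $\|e_{k}\|\to 0$, which is the claim.

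The main obstacle is not analytic but notational: one must keep straight how the controllability and observability data transpose into one another, and verify that the formula \eqref{eqn:optimalK} really is the conjugate transpose of \eqref{eqn:optimalL} after the substitution $(A,B)\mapsto(A^{T},C^{T})$. Once this bookkeeping is done, nothing substantive remains to prove. A self-contained alternative, matching more closely the receding-horizon spirit, would be to take the optimal value $V(\xhat,x)=(A^{N}(x-\xhat))^{T}\bigl(\sum_{j=0}^{N-1}A^{j}BRB^{T}A^{jT}\bigr)^{-1}A^{N}(x-\xhat)$ as a Lyapunov-type function: feasibility of the tail $(v_{1},\ldots,v_{N-1},0)$ for the problem at the next time step gives the one-step decrease $V(\xhat^{+},x^{+})\le V(\xhat,x)-\|v_{0}\|_{R^{-1}}^{2}$, summability forces $\|v_{0}\|\to 0$, and the explicit form $v_{0}=K(x-\xhat)$ combined with the Schur property of $A-BK$ then yields $e_{k}\to 0$ by the dual counterpart of the claim \eqref{eqn:claim}.
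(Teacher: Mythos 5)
Your proposal is correct and follows essentially the same route as the paper: solve the terminal-constrained minimum-energy problem in closed form to obtain $v_{0}=K(x-\xhat)$ with $K$ as in \eqref{eqn:optimalK}, then invoke duality with Theorem~\ref{thm:asyobslin} (via \eqref{eqn:optimalL} and the Schur property of $A-LC$) to conclude that $A-BK$ is Schur. The paper's own proof is just a two-sentence sketch of exactly this argument, so your version simply supplies the bookkeeping it omits.
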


\begin{proof}
Given $\xhat$ and $x$, one can show that $v_{0}=K(x-\xhat)$ with
$K$ given in \eqref{eqn:optimalK}. In the light of duality
convergence then follows from Theorem~\ref{thm:asyobslin} and
\eqref{eqn:optimalL}.
\end{proof}

\section{Nonlinear formulations}

In this section we present possible nonlinear extensions of the
linear formulations described earlier in the paper. First, for the
observer design problem we will propose an optimization-based
formulation that leads to desired observer behavior under certain
sufficient conditions. Then we will repeat the procedure for the
tracker design problem. Throughout this section the pairs
$(\X,\,\rho_{\rm x})$ and $(\Y,\,\rho_{\rm y})$ will denote
finite-dimensional complete metric spaces \cite{browder96}.
\medskip\smallskip

\noindent{\bf Caveat.} Henceforth we will avoid the standard use
of parentheses when the risk of confusion is negligible. For
instance, $h(f(x))$ will be replaced by $hfx$.

\subsection{Observer design}

Consider the system~\eqref{eqn:nonlinear} and the
observer~\eqref{eqn:nlobserver}. We let $f$ and $h$ be uniformly
continuous functions. Let $\ell:\Y\times\Y\to\Real_{\geq 0}$ and
$\alpha_{1},\,\alpha_{2}\in\Kinf$ satisfy $\alpha_{1}\rho_{\rm
y}(v,\,w)\leq\ell(v,\,w)\leq\alpha_{2}\rho_{\rm y}(v,\,w)$ for
every $v,\,w\in\Y$. There is no harm in assuming the symmetry
$\ell(v,\,w)=\ell(w,\,v)$. Now we define the cost function
$J:\X\times\X\times\Y\to\Real_{\geq 0}$ as
\begin{eqnarray}\label{eqn:cost}
J(\xi,\,z,\,y):=\ell(hf^{N-1}\xi,\,y)+\sum_{i=0}^{N-2}\ell(hf^{i}\xi,\,hf^{i}z)\,.
\end{eqnarray}

\begin{assumption}\label{assume:two}
The following hold.
\begin{enumerate}
\item There exists $\alpha_{3}\in\Kinf$ such that for all
$z,\,\tx\in\X$ we have
\begin{eqnarray}\label{eqn:uniformobs}
\sum_{i=0}^{N-1}\ell(hf^{i}z,\,hf^{i}\tx)\geq\alpha_{3}\rho_{\rm
x}(z,\,\tx)\,.
\end{eqnarray}
\item There exists $\eta:\X\times\Y\to\X$ such that
$J(\eta(z,\,y),\,z,\,y)<J(\xi,\,z,\,y)$ for all
$\xi\neq\eta(z,\,y)$. Moreover, there exists $\alpha_{4}\in\Kinf$
such that for all $z,\,\tx\in\X$ we have
\begin{eqnarray}\label{eqn:Jdecay}
\alpha_{4}J(\eta,\,z,\,hf^{N-1}\tx)+
\sum_{i=0}^{N-1}\ell(hf^{i}\eta,\,hf^{i}\tx)
\leq\sum_{i=0}^{N-2}\ell(hf^{i}z,\,hf^{i}\tx)
\end{eqnarray}
where $\eta=\eta(z,\,hf^{N-1}\tx)$.
\end{enumerate}
\end{assumption}

\begin{remark}
The linear case studied in Theorem~\ref{thm:asyobslin} inspires
the conditions listed in Assumption~\ref{assume:two}.  In
particular, the first condition is a characterization of (uniform)
observability and the second condition attempts to translate
\eqref{eqn:aramis} to the nonlinear setting.
\end{remark}

\begin{theorem}\label{thm:asyobs}
Consider the system~\eqref{eqn:nonlinear}. Let $N\geq 1$ and
Assumption~\ref{assume:two} hold. Consider the
observer~\eqref{eqn:nlobserver} with
$\eta=\arg\min_{\xi}J(\xi,\,z,\,y)$. Then for each $\varepsilon>0$
there exists $\delta>0$ such that $\rho_{\rm
x}(\xhat_{0},\,x_{0})\leq\delta$ implies $\rho_{\rm
x}(\xhat_{k},\,x_{k})\leq\varepsilon$ for all $k\in\Natural$.
Moreover, for all initial conditions, $\rho_{\rm
x}(\xhat_{k},\,x_{k})\to 0$ as $k\to\infty$.
\end{theorem}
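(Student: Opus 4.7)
The plan is to construct the nonlinear analog of the Lyapunov argument used in the proof of Theorem~\ref{thm:asyobslin}. The candidate Lyapunov function is
\begin{equation*}
V(z,\tx):=\sum_{i=0}^{N-2}\ell(hf^i z,\,hf^i\tx),
\end{equation*}
and I pair the observer state $z_k$ with the lagged system state $\tx_k:=x_{k-N+1}$, meaningful for $k\ge N-1$, so that $y_k=hf^{N-1}\tx_k$ and $\tx_{k+1}=f(\tx_k)$. The crux is the descent inequality
\begin{equation*}
V(z^+,\tx^+)\le V(z,\tx)-\alpha_4\bigl(J(\eta,z,hf^{N-1}\tx)\bigr),
\end{equation*}
which I would obtain by the direct computation $V(z^+,\tx^+)=\sum_{i=1}^{N-1}\ell(hf^i\eta,hf^i\tx)$, appending the nonnegative term $\ell(h\eta,h\tx)$ to match the left-hand side of \eqref{eqn:Jdecay}, and then invoking that inequality. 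This is the nonlinear counterpart of the identity \eqref{eqn:aramis}.

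For the convergence conclusion, the descent yields $V(z_k,\tx_k)$ nonnegative and monotonically nonincreasing, whence $\sum_k\alpha_4(J(\eta_k,z_k,y_k))<\infty$ and so $J(\eta_k,z_k,y_k)\to 0$. Each summand of $J$ then vanishes, and I would run a descending induction on $i\in\{N-1,N-2,\ldots,0\}$ mirroring the ``Claim'' in the proof of Theorem~\ref{thm:asyobslin} to promote this into $\rho_{\rm y}(hf^i\eta_k,hf^i\tx_k)\to 0$ for every $i$. The inductive step uses the identities $hf^i z_{k+1}=hf^{i+1}\eta_k$ and $hf^i\tx_{k+1}=hf^{i+1}\tx_k$ together with the triangle inequality in $\rho_{\rm y}$ bracketed by $\alpha_1,\alpha_2$. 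Uniform observability \eqref{eqn:uniformobs} then gives $\rho_{\rm x}(\eta_k,\tx_k)\to 0$, and since $\xhat_{k+1}=f^N\eta_k$ and $x_{k+1}=f^N\tx_k$, uniform continuity of the composition $f^N$ transfers this to $\rho_{\rm x}(\xhat_k,x_k)\to 0$.

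The Lyapunov stability clause is the part I expect to require the most care. The difficulty is that $V$ alone does not dominate $\rho_{\rm x}(\eta,\tx)$, because the observability estimate \eqref{eqn:uniformobs} involves the extra summand $\ell(hf^{N-1}\eta,hf^{N-1}\tx)$ that is absent from $V$. I would recover this missing term from $V(z^+,\tx^+)=\sum_{i=1}^{N-1}\ell(hf^i\eta,hf^i\tx)\ge\ell(hf^{N-1}\eta,hf^{N-1}\tx)$, so that two consecutive values of $V$ jointly control $\alpha_3(\rho_{\rm x}(\eta_k,\tx_k))$. To initialize, the optimality bound $J(\eta,z,y)\le J(\tx,z,y)=V(z,\tx)$ (from symmetry of $\ell$ and $y=hf^{N-1}\tx$) combined with uniform continuity of $f,h$ permits bounding $V(z_{N-1},\tx_{N-1})$ by a $\Kinf$ function of $\rho_{\rm x}(\xhat_0,x_0)$ through the $N-1$ step transient; monotonicity of $V$ then propagates the bound to every $k\ge N-1$, while the finite transient $0\le k\le N-2$ is handled directly by uniform continuity. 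Uniform continuity of $f^N$ applied via $\xhat_k=f^N\eta_{k-1}$ and $x_k=f^N\tx_{k-1}$ closes the loop.
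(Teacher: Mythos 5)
Your overall architecture coincides with the paper's own proof: the same Lyapunov-like quantity $V(z,\tx)=\sum_{i=0}^{N-2}\ell(hf^iz,\,hf^i\tx)$ paired with the lagged state $\tx_k=x_{k-N+1}$, the same descent inequality extracted from \eqref{eqn:Jdecay} (this is exactly \eqref{eqn:Jsumbounded}), summability of $\alpha_4 J$ giving $J(\eta_k,z_k,y_k)\to 0$, the same descending induction converting small $J$ into small $\rho_{\rm y}(hf^{N-q}\eta_k,\,y_{k-q+1})$ (the paper's ``Claim''), and \eqref{eqn:uniformobs} plus uniform continuity of $f^N$ to finish. The convergence half of your argument is sound as written.

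The one step whose stated justification would fail is the initialization of the stability bound. You propose to control the transient and to bound $V(z_{N-1},\tx_{N-1})$ using the comparison $J(\eta,z,y)\le J(\tx,z,y)=V(z,\tx)$ ``combined with uniform continuity of $f,h$.'' Two problems. First, $\tx_k=x_{k-N+1}$ does not exist for $k<N-1$ ($f$ is not assumed invertible), so this comparison point is unavailable precisely on the transient where you need it. Second, uniform continuity of $f$ and $h$ cannot by itself control $\rho_{\rm x}(z_{k+1},f^{k+1}z_0)$: since $z^{+}=f\eta$, you must bound $\rho_{\rm x}(\eta_k,z_k)$, and continuity alone does not prevent the minimizer $\eta_k$ from being far from $z_k$ in state space. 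The repair --- which is what the paper does in \eqref{eqn:boundedness1}--\eqref{eqn:boundedness4} and \eqref{eqn:boundedness6}--\eqref{eqn:boundedness8} --- is to use the always-available comparison $J(\eta,z,y)\le J(z,z,y)=\ell(h\xhat,hx)\le\alpha_2\alpha_6\rho_{\rm x}(\xhat,x)$ and then apply the observability bound \eqref{eqn:uniformobs} to the pair $(\eta,z)$ (adding and subtracting $\ell(hf^{N-1}\eta,hf^{N-1}z)$, which is itself controlled via a triangle inequality through $y$) to obtain $\rho_{\rm x}(\eta,z)\le\alpha_9\rho_{\rm x}(\xhat,x)$ and hence the one-step bound $\rho_{\rm x}(\xhat^{+},x^{+})\le\alpha_7\rho_{\rm x}(\xhat,x)$; iterating this over the first $N-1$ steps both handles the transient and yields the required $\Kinf$ bound on $V(z_{N-1},\tx_{N-1})$ in terms of $\rho_{\rm x}(\xhat_0,x_0)$. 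With that substitution your proof closes and is essentially the paper's.
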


\begin{proof}
Note that if $N=1$ then $\eta=\arg\min_{\xi}\ell(h\xi,\,hx)$ and
by uniqueness assumption we must have $\eta=x$. Therefore
$\xhat_{k}=x_{k}$ for all $k\geq 1$ and the result follows
trivially. In the sequel we suppose $N\geq 2$.

We begin by stability. Since $f$ and $h$ are uniformly continuous
there exist $\alpha_{5},\,\alpha_{6}\in\Kinf$ such that $\rho_{\rm
x}(f\xi,\,f\zeta)\leq\alpha_{5}\rho_{\rm x}(\xi,\,\zeta)$ and
$\rho_{\rm y}(h\xi,\,h\zeta)\leq\alpha_{6}\rho_{\rm
x}(\xi,\,\zeta)$ for all $\xi,\,\zeta\in\X$. Then we have
\begin{eqnarray}
\rho_{\rm x}(\xhat^{+},\,x^{+})
&=&\rho_{\rm x}(f^{N}\eta,\,fx)\nonumber\\
&\leq&\alpha_{5}\rho_{\rm x}(f^{N-1}\eta,\,x)\nonumber\\
&\leq&\alpha_{5}(\rho_{\rm x}(f^{N-1}\eta,\,f^{N-1}z)+\rho_{\rm x}(f^{N-1}z,\,x))\nonumber\\
&=&\alpha_{5}(\rho_{\rm x}(f^{N-1}\eta,\,f^{N-1}z)+\rho_{\rm x}(\xhat,\,x))\nonumber\\
&=&\alpha_{5}(\alpha_{5}^{N-1}\rho_{\rm x}(\eta,\,z)+\rho_{\rm
x}(\xhat,\,x))\,.\label{eqn:boundedness1}
\end{eqnarray}
By \eqref{eqn:uniformobs} and the fact that $J(\eta,\,z,\,y)\leq
J(z,\,z,\,y)=\ell(h\xhat,\,hx)$ we can proceed as
\begin{eqnarray}
\rho_{\rm
x}(\eta,\,z)&\leq&\alpha_{3}^{-1}\sum_{i=0}^{N-1}\ell(hf^{i}\eta,\,hf^{i}z)\nonumber\\
&=&\alpha_{3}^{-1}(J(\eta,\,z,\,y)+\ell(hf^{N-1}\eta,\,hf^{N-1}z)-\ell(hf^{N-1}\eta,\,hx))\nonumber\\
&\leq&\alpha_{3}^{-1}(\ell(h\xhat,\,hx)+\ell(hf^{N-1}\eta,\,hf^{N-1}z))\nonumber\\
&\leq&\alpha_{3}^{-1}(\alpha_{2}\alpha_{6}\rho_{\rm
x}(\xhat,\,x)+\ell(hf^{N-1}\eta,\,hf^{N-1}z))\,.\label{eqn:boundedness2}
\end{eqnarray}
Moreover,
\begin{eqnarray}
\ell(hf^{N-1}\eta,\,hf^{N-1}z)
&\leq&\alpha_{2}\rho_{\rm y}(hf^{N-1}\eta,\,hf^{N-1}z)\nonumber\\
&\leq&\alpha_{2}(\rho_{\rm y}(hf^{N-1}\eta,\,hx)+\rho_{\rm y}(hx,\,hf^{N-1}z))\nonumber\\
&\leq&\alpha_{2}(\alpha_{1}^{-1}\ell(hf^{N-1}\eta,\,hx)+\rho_{\rm y}(hx,\,h\xhat))\nonumber\\
&\leq&\alpha_{2}(\alpha_{1}^{-1}J(\eta,\,z,\,y)+\alpha_{6}\rho_{\rm x}(\xhat,\,x))\nonumber\\
&\leq&\alpha_{2}(\alpha_{1}^{-1}\ell(h\xhat,\,hx)+\alpha_{6}\rho_{\rm x}(\xhat,\,x))\nonumber\\
&\leq&\alpha_{2}(\alpha_{1}^{-1}\alpha_{2}\alpha_{6}\rho_{\rm
x}(\xhat,\,x)+\alpha_{6}\rho_{\rm
x}(\xhat,\,x))\,.\label{eqn:boundedness3}
\end{eqnarray}
Now let us define $\alpha_{7}\in\Kinf$ as
\begin{eqnarray*}
\alpha_{7}s:=\alpha_{5}(\alpha_{5}^{N-1}\alpha_{3}^{-1}(\alpha_{2}\alpha_{6}s+\alpha_{2}(\alpha_{1}^{-1}\alpha_{2}\alpha_{6}s+\alpha_{6}s))+s)\,.
\end{eqnarray*}
Then by \eqref{eqn:boundedness1}, \eqref{eqn:boundedness2}, and
\eqref{eqn:boundedness3} we can write
\begin{eqnarray}\label{eqn:boundedness4}
\rho_{\rm x}(\xhat^{+},\,x^{+})\leq\alpha_{7}\rho_{\rm
x}(\xhat,\,x)\,,
\end{eqnarray}
which tells us that if $x$ and its estimate $\xhat$ are close to
each other at some instant then they will stay close at the next
instant. Now we direct our attention to \eqref{eqn:Jdecay}. Before
however let us let $\tx$ indicate the $N-1$ time steps earlier
value of the state $x$, i.e., $\tx_{k}:=x_{k-N+1}$ for $k\geq
N-1$. Then we can write
\begin{eqnarray*}
\alpha_{4}J(\eta,\,z,\,hf^{N-1}\tx)
&\leq&\sum_{i=0}^{N-2}\ell(hf^{i}z,\,hf^{i}\tx)-\sum_{i=0}^{N-1}\ell(hf^{i}\eta,\,hf^{i}\tx)\\
&\leq&\sum_{i=0}^{N-2}\ell(hf^{i}z,\,hf^{i}\tx)-\sum_{i=1}^{N-1}\ell(hf^{i}\eta,\,hf^{i}\tx)\\
&=&\sum_{i=0}^{N-2}\ell(hf^{i}z,\,hf^{i}\tx)-\sum_{i=0}^{N-2}\ell(hf^{i}z^{+},\,hf^{i}\tx^{+})\,,
\end{eqnarray*}
which implies
\begin{eqnarray}\label{eqn:Jsumbounded}
\sum_{k=k_{0}}^{\infty}\alpha_{4}J(\eta_{k},\,z_{k},\,y_{k})\leq\sum_{i=0}^{N-2}\ell(hf^{i}z_{k_{0}},\,hf^{i}\tx_{k_{0}})
\end{eqnarray}
for all $k_{0}\geq N-1$. We now demonstrate the following.

{\em Claim:} There exists $\alpha_{8}\in\Kinf$ such that for all
$k_{1}\in\Natural$
\begin{eqnarray}\label{eqn:claimNL}
J(\eta_{k},\,z_{k},\,y_{k})\leq\delta\quad \forall k\geq
k_{1}\implies \rho_{\rm
x}(\xhat_{k},\,x_{k})\leq\alpha_{8}\delta\quad \forall k\geq
k_{1}+N\,.
\end{eqnarray}
We prove this claim as follows. Let us for some $\delta$ and
$k_{1}$ have $J(\eta_{k},\,z_{k},\,y_{k})\leq\delta$ for all
$k\geq k_{1}$, which by \eqref{eqn:cost} implies
\begin{eqnarray*}
\max\ \{\rho_{\rm y}(hf^{N-1}\eta_{k},\,y_{k}),\,\rho_{\rm
y}(hf^{N-2}\eta_{k},\,hf^{N-2}z_{k}),\,\ldots,\,\rho_{\rm
y}(h\eta_{k},\,hz_{k})\}\leq\delta_1
\end{eqnarray*}
with $\delta_{1}=\alpha_{1}^{-1}\delta$. Suppose for some
$p\in\{1,\,2,\,\ldots,\,N-1\}$ and some $k_{p}\geq k_{1}$ we have
\begin{eqnarray}\label{eqn:claimNLproof1}
\rho_{\rm y}(hf^{N-q}\eta_{k},\,y_{k-q+1})\leq q\delta_{1}
\quad\forall q\in\{1,\,2,\,\ldots,\,p\}\quad\forall k\geq k_{p}\,.
\end{eqnarray}
Then we can write for all $q\in\{1,\,2,\,\ldots,\,p\}$ and $k\geq
k_{p}$
\begin{eqnarray*}
\rho_{\rm y}(hf^{N-(q+1)}\eta_{k+1},\,y_{(k+1)-(q+1)+1})&\leq&
\rho_{\rm y}(hf^{N-q-1}\eta_{k+1},\,hf^{N-q-1}z_{k+1})\\
&&+\rho_{\rm y}(hf^{N-q-1}z_{k+1},\,y_{k-q+1})\\
&=&
\rho_{\rm y}(hf^{N-q-1}\eta_{k+1},\,hf^{N-q-1}z_{k+1})\\&&+\rho_{\rm y}(hf^{N-q}\eta_{k},\,y_{k-q+1})\\
&\leq&\delta_{1}+q\delta_{1}\\ &=&(q+1)\delta_{1}
\end{eqnarray*}
which allows us to assert
\begin{eqnarray*}
\rho_{\rm y}(hf^{N-q}\eta_{k},\,y_{k-q+1})\leq
q\delta_{1}\quad\forall q\in\{1,\,2,\,\ldots,\,p+1\}\quad\forall
k\geq k_{p}+1\,.
\end{eqnarray*}
Since \eqref{eqn:claimNLproof1} holds with $p=1$, by induction we
can write
\begin{eqnarray}\label{eqn:claimNLproof2}
\rho_{\rm y}(hf^{N-q}\eta_{k},\,y_{k-q+1})\leq q\delta_{1}
\quad\forall q\in\{1,\,2,\,\ldots,\,N\}\quad\forall k\geq
k_{1}+N-1\,.
\end{eqnarray}
Now by \eqref{eqn:uniformobs} and \eqref{eqn:claimNLproof2} we can
write for $k\geq k_{1}+N-1$
\begin{eqnarray*}
\rho_{\rm x}(\xhat_{k+1},\,x_{k+1})
&=&\rho_{\rm x}(f^{N}\eta_{k},\,f^{N}\tx_{k})\\
&\leq&\alpha_{5}^{N}\rho_{\rm x}(\eta_{k},\,\tx_{k})\\
&\leq&\alpha_{5}^{N}\alpha_{3}^{-1}\sum_{i=0}^{N-1}\ell(hf^{i}\eta_{k},\,hf^{i}\tx_{k})\\
&=&\alpha_{5}^{N}\alpha_{3}^{-1}\sum_{q=1}^{N}\ell(hf^{N-q}\eta_{k},\,y_{k-q+1})\\
&\leq&\alpha_{5}^{N}\alpha_{3}^{-1}\sum_{q=1}^{N}\alpha_{2}\rho_{\rm y}(hf^{N-q}\eta_{k},\,y_{k-q+1})\\
&\leq&\alpha_{5}^{N}\alpha_{3}^{-1}\sum_{q=1}^{N}\alpha_{2}q\delta_{1}\\
&=&\alpha_{5}^{N}\alpha_{3}^{-1}\sum_{q=1}^{N}\alpha_{2}q\alpha_{1}^{-1}\delta\,.
\end{eqnarray*}
This proves our claim since we can define $\alpha_{8}\in\Kinf$ as
\begin{eqnarray*}
\alpha_{8}s:=\alpha_{5}^{N}\alpha_{3}^{-1}\sum_{q=1}^{N}\alpha_{2}q\alpha_{1}^{-1}s\,.
\end{eqnarray*}
Now we return to the proof of the theorem. By
\eqref{eqn:Jsumbounded} we can write for all $k\geq N-1$
\begin{eqnarray}\label{eqn:boundedness5}
\alpha_{4}J(\eta_{k},\,z_{k},\,y_{k})
&\leq&\sum_{i=0}^{N-2}\ell(hf^{i}z_{N-1},\,hf^{i}\tx_{N-1})\nonumber\\
&\leq&\sum_{i=0}^{N-2}\alpha_{2}\alpha_{6}\rho_{\rm x}(f^{i}z_{N-1},\,f^{i}\tx_{N-1})\nonumber\\
&\leq&\sum_{i=0}^{N-2}\alpha_{2}\alpha_{6}\alpha_{5}^{i}\rho_{\rm
x}(z_{N-1},\,x_{0})\nonumber\\
&\leq&\sum_{i=0}^{N-2}\alpha_{2}\alpha_{6}\alpha_{5}^{i}(\rho_{\rm
x}(z_{N-1},\,\xhat_{0})+\rho_{\rm
x}(\xhat_{0},\,x_{0}))\nonumber\\
&=&\sum_{i=0}^{N-2}\alpha_{2}\alpha_{6}\alpha_{5}^{i}(\rho_{\rm
x}(z_{N-1},\,f^{N-1}z_{0})+\rho_{\rm x}(\xhat_{0},\,x_{0}))\,.
\end{eqnarray}
Observe that for all $k\geq 1$ we can write
\begin{eqnarray}\label{eqn:boundedness6}
\rho_{\rm x}(z_{k},\,f^{k}z_{0})
&=&\rho_{\rm x}(f\eta_{k-1},\,f^{k}z_{0})\nonumber\\
&\leq&\alpha_{5}\rho_{\rm x}(\eta_{k-1},\,f^{k-1}z_{0})\nonumber\\
&\leq&\alpha_{5}(\rho_{\rm x}(\eta_{k-1},\,z_{k-1})+\rho_{\rm
x}(z_{k-1},\,f^{k-1}z_{0}))\,.
\end{eqnarray}
By \eqref{eqn:boundedness2} and \eqref{eqn:boundedness3} we can
write
\begin{eqnarray*}
\rho_{\rm x}(\eta,\,z)\leq\alpha_{9}\rho_{\rm x}(\xhat,\,x)
\end{eqnarray*}
where we define $\alpha_{9}\in\Kinf$ as
\begin{eqnarray*}
\alpha_{9}s:=\alpha_{3}^{-1}(\alpha_{2}\alpha_{6}s+\alpha_{2}(\alpha_{1}^{-1}\alpha_{2}\alpha_{6}s+\alpha_{6}s))\,.
\end{eqnarray*}
Hence we can proceed from \eqref{eqn:boundedness6} as
\begin{eqnarray}\label{eqn:boundedness7}
\rho_{\rm
x}(z_{k},\,f^{k}z_{0})&\leq&\alpha_{5}(\alpha_{9}\rho_{\rm
x}(\xhat_{k-1},\,x_{k-1})+\rho_{\rm
x}(z_{k-1},\,f^{k-1}z_{0}))\nonumber\\
&\leq&\alpha_{5}(\alpha_{9}\alpha_{7}^{k-1}\rho_{\rm
x}(\xhat_{0},\,x_{0})+\rho_{\rm x}(z_{k-1},\,f^{k-1}z_{0}))
\end{eqnarray}
where we used \eqref{eqn:boundedness4}. Then
\eqref{eqn:boundedness7} implies
\begin{eqnarray}\label{eqn:boundedness8}
\rho_{\rm x}(z_{N-1},\,f^{N-1}z_{0})&\leq&\gamma_{N-1}\rho_{\rm
x}(\xhat_{0},\,x_{0})
\end{eqnarray}
where we define $\gamma_{j}\in\Kinf$ recursively through
\begin{eqnarray*}
\gamma_{j+1}s:=\alpha_{5}(\alpha_{9}\alpha_{7}^{j}s+\gamma_{j}s)
\end{eqnarray*}
for $j\in\{1,\,2,\,\ldots\}$ with
$\gamma_{1}s:=\alpha_{5}\alpha_{9}s$. Now, by
\eqref{eqn:boundedness5} and \eqref{eqn:boundedness8} we can write
for all $k\geq N-1$
\begin{eqnarray}\label{eqn:boundedness9}
J(\eta_{k},\,z_{k},\,y_{k})\leq\alpha_{10}\rho_{\rm
x}(\xhat_{0},\,x_{0})
\end{eqnarray}
once we define $\alpha_{10}\in\Kinf$ as
\begin{eqnarray*}
\alpha_{10}s:=\alpha_{4}^{-1}\sum_{i=0}^{N-2}\alpha_{2}\alpha_{6}\alpha_{5}^{i}(\gamma_{N-1}s+s)\,.
\end{eqnarray*}
Note that \eqref{eqn:claimNL} and \eqref{eqn:boundedness9} allow
us to write
\begin{eqnarray*}
\rho_{\rm x}(\xhat_{k},\,x_{k})\leq\alpha_{8}\alpha_{10}\rho_{\rm
x}(\xhat_{0},\,x_{0})\quad\forall k\geq 2N-1\,.
\end{eqnarray*}
Moreover, by \eqref{eqn:boundedness4} we have
\begin{eqnarray*}
\rho_{\rm x}(\xhat_{k},\,x_{k})\leq\alpha_{7}^{k}\rho_{\rm
x}(\xhat_{0},\,x_{0})\quad\forall k\in\{0,\,1,\,\ldots,\,2N-2\}\,.
\end{eqnarray*}
Hence by defining $\alpha_{11}\in\Kinf$ as
\begin{eqnarray*}
\alpha_{11}s:=\max\left\{\alpha_{8}\alpha_{10}s,\,\max_{k\in\{0,\,\ldots,\,2N-2\}}\alpha_{7}^{k}s\right\}
\end{eqnarray*}
we can write
\begin{eqnarray*}
\rho_{\rm x}(\xhat_{k},\,x_{k})\leq\alpha_{11}\rho_{\rm
x}(\xhat_{0},\,x_{0})\quad\forall k\in\Natural
\end{eqnarray*}
which establishes the stability.

Now we prove convergence. From \eqref{eqn:Jsumbounded} we deduce
that $J(\eta_{k},\,z_{k},\,y_{k})\to 0$ as $k\to\infty$. Then
\eqref{eqn:claimNL} implies $\rho_{\rm x}(\xhat_{k},\,x_{k})\to 0$
as $k\to\infty$.
\end{proof}

\subsection{Tracker design}

Here we attempt to generalize the linear result of
Theorem~\ref{thm:asyconlin}, where an optimal tracker was
constructed through solving the following problem
\begin{eqnarray}\label{eqn:linearoptimalcost}
V(\xhat,\,x):=\min_{(w_{i})_{i=0}^{N-1}}\sum_{i=0}^{N-1}w_{i}^{T}R^{-1}w_{i}\quad\mbox{subject
to}\quad
\left\{\begin{array}{rcl}z_{0}&=&\xhat\\z_{i+1}&=&Az_{i}+Bw_{i}\\z_{N}&=&A^{N}x\end{array}\right.
\end{eqnarray}
where the control inputs $w_{0},\,w_{1},\,\ldots,\,w_{N-1}$ are
penalized via the quadratic stage cost $w\mapsto w^{T}R^{-1}w$.
Note that in a general nonlinear setting, imposing a direct
penalty on the control input may not be meaningful. For instance,
$w$ may just be an index that belongs to a finite set. For this
reason we will now try to express $V(\xhat,\,x)$ in a different
way that is more welcoming to generalization. If we assume that
$B$ is full column rank and let
$Q:=B(B^{T}B)^{-1}R^{-1}(B^{T}B)^{-1}B^{T}$, we can write
\begin{eqnarray*}
w_{i}^{T}R^{-1}w_{i}&=&w_{i}^{T}B^{T}QBw_{i}\\
&=&(z_{i+1}-Az_{i})^{T}Q(z_{i+1}-Az_{i})\,.
\end{eqnarray*}
Then we have
\begin{eqnarray*}
V(\xhat,\,x) &=& \min_{(z_{i})_{i=0}^{N}}\
\sum_{i=0}^{N-1}(z_{i+1}-Az_{i})^{T}Q(z_{i+1}-Az_{i})\\
&&\qquad\mbox{subject to}\quad
\left\{\begin{array}{rcl}z_{0}&=&\xhat\\z_{i+1}&\in&Az_{i}+{\rm
range}(B)\\z_{N}&=&A^{N}x\end{array}\right.
\end{eqnarray*}
which is the form we adopt for generalization. We remind the
reader that the optimal cost~\eqref{eqn:linearoptimalcost} enjoys
the following analytical expression
\begin{eqnarray*}
V(\xhat,\,x)=(\xhat-x)^{T}A^{NT}\left(\sum_{i=0}^{N-1}A^{i}BRB^{T}A^{iT}\right)^{-1}A^{N}(\xhat-x)\,.
\end{eqnarray*}
which is positive definite (for nonsingular $A$) with respect to
the error $e=\xhat-x$.

Consider now the system
\begin{eqnarray}\label{eqn:nonlinearcon}
x^{+}=fx
\end{eqnarray}
with $f:\X\to\X$ continuous and the tracker
\begin{eqnarray}\label{eqn:tracker}
\xhat^{+}=F(\xhat,\,u)
\end{eqnarray}
with $F:\X\times\U\to\X$. We assume $f\xhat\in F(\xhat,\,\U)$ for
all $\xhat\in\X$. Let $\ell:\X\times\X\to\Real_{\geq 0}$ and
$\alpha_{1},\,\alpha_{2}\in\Kinf$ satisfy $\alpha_{1}\rho_{\rm
x}(\xi,\,\zeta)\leq\ell(\xi,\,\zeta)\leq\alpha_{2}\rho_{\rm
x}(\xi,\,\zeta)$ for every $\xi,\,\zeta\in\X$. Since there is no
significant reason against it, we assume the symmetry
$\ell(\xi,\,\zeta)=\ell(\zeta,\,\xi)$. Let $N\geq 1$ and
$(\phi_{i})_{i=0}^{N}$ denote a sequence of functions
$\phi_{i}:\X\times\X\to\X$ satisfying
\begin{eqnarray}\label{eqn:phi}
(\phi_{i}(\xhat,\,x))_{i=0}^{N}&=&\arg\min_{(z_{i})_{i=0}^{N}}\
\sum_{i=0}^{N-1}\ell(z_{i+1},\,fz_{i})\nonumber\\
&&\qquad\mbox{subject to}\quad
\left\{\begin{array}{rcl}z_{0}&=&\xhat\\z_{i+1}&\in&F(z_{i},\,\U)\\z_{N}&=&f^{N}x\end{array}\right.
\end{eqnarray}
Note that $\phi_{0}(\xhat,\,x)=\xhat$ and
$\phi_{N}(\xhat,\,x)=f^{N}x$. Assuming $(\phi_{i})_{i=0}^{N}$
exists we can run the tracker~\eqref{eqn:tracker} so as to satisfy
$\xhat^{+}=\phi_{1}(\xhat,\,x)$ since by \eqref{eqn:phi} we have
$\phi_{1}(\xhat,\,x)\in F(\xhat,\,\U)$, i.e., for each pair
$(\xhat,\,x)$ we can find $u\in\U$ such that
$\phi_{1}(\xhat,\,x)=F(\xhat,\,u)$. Determining whether this
construction will actually work or not requires some analysis. As
usual in moving horizon feedback systems \cite{keerthi88} the
optimal cost $V:\X\times\X\to\Real_{\geq 0}$ defined below will be
of key importance in the analysis.

\begin{eqnarray}\label{eqn:V}
V(\xhat,\,x):=\min_{(z_{i})_{i=0}^{N}}\
\sum_{i=0}^{N-1}\ell(z_{i+1},\,fz_{i})\quad\mbox{subject to}\quad
\left\{\begin{array}{rcl}z_{0}&=&\xhat\\z_{i+1}&\in&F(z_{i},\,\U)\\z_{N}&=&f^{N}x\end{array}\right.
\end{eqnarray}

\begin{assumption}\label{assume:Vcontinuous}
The following hold.
\begin{enumerate}
\item The optimal cost~\eqref{eqn:V} is continuous and there exist
$\alpha_{3},\,\alpha_{4}\in\Kinf$ such that
\begin{eqnarray}\label{eqn:Vposdef}
\alpha_{3}\rho_{\rm x}(\xhat,\,x)\leq
V(\xhat,\,x)\leq\alpha_{4}\rho_{\rm x}(\xhat,\,x)
\end{eqnarray}
for all $\xhat,\,x\in\X$. \item The sequence of
functions~\eqref{eqn:phi} is unique and its second element
$\phi_{1}$ is continuous.
\end{enumerate}
\end{assumption}

\begin{remark}
Note that Assumption~\ref{assume:Vcontinuous} comes for free for
linear systems under the conditions of
Theorem~\ref{thm:asyconlin}, provided that the system matrix $A$
is nonsingular.
\end{remark}

\begin{theorem}\label{thm:asycon}
Consider the system~\eqref{eqn:nonlinearcon} and the
tracker~\eqref{eqn:tracker}. Let $N\geq 1$ and the control input
$u$ of the tracker satisfy $F(\xhat,\,u)=\phi_{1}(\xhat,\,x)$
where $\phi_{1}$ is the second term of the sequence of
functions~\eqref{eqn:phi}. If Assumption~\ref{assume:Vcontinuous}
holds, then for each $\varepsilon>0$ there exists $\delta>0$ such
that $\rho_{\rm x}(\xhat_{0},\,x_{0})\leq\delta$ implies
$\rho_{\rm x}(\xhat_{k},\,x_{k})\leq\varepsilon$ for all initial
conditions and $k\in\Natural$. Moreover, if the solutions
$\xhat_{k}$ and $x_{k}$ belong to a bounded region $\D\subset\X$
for all $k\in\Natural$, then $\rho_{\rm x}(\xhat_{k},\,x_{k})\to
0$ as $k\to\infty$.
\end{theorem}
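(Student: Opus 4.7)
The plan is to use the optimal cost $V$ defined in \eqref{eqn:V} as a Lyapunov function for the closed-loop dynamics $\xhat^+=\phi_1(\xhat,\,x)$, $x^+=fx$. Writing $\phi_i:=\phi_i(\xhat,\,x)$, the sequence $(\phi_1,\,\phi_2,\,\ldots,\,\phi_N,\,f\phi_N)=(\phi_1,\,\ldots,\,f^N x,\,f^{N+1}x)$ is a feasible trajectory in \eqref{eqn:phi} from $\xhat^+=\phi_1$ to the new terminal $f^N x^+=f^{N+1}x$: each transition $\phi_{i+1}\in F(\phi_i,\,\U)$ was already feasible in the old problem, while the appended final transition uses the standing assumption $f\xi\in F(\xi,\,\U)$. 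Its cost equals $V(\xhat,\,x)-\ell(\phi_1,\,f\phi_0)$, because the appended stage contributes $\ell(f\cdot f^N x,\,f\cdot f^N x)=0$ and $\phi_0=\xhat$, yielding
\begin{eqnarray*}
V(\xhat^+,\,x^+)\leq V(\xhat,\,x)-\ell(\xhat^+,\,f\xhat)\,.
\end{eqnarray*}

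Stability follows at once from \eqref{eqn:Vposdef}: given $\varepsilon>0$, choose $\delta>0$ with $\alpha_4(\delta)\leq\alpha_3(\varepsilon)$, so that $\alpha_3\rho_{\rm x}(\xhat_k,\,x_k)\leq V(\xhat_k,\,x_k)\leq V(\xhat_0,\,x_0)\leq\alpha_4(\delta)\leq\alpha_3(\varepsilon)$ for all $k\in\Natural$. For convergence I would sum the Lyapunov inequality to obtain $\sum_k\ell(\phi_1(\xhat_k,\,x_k),\,f\xhat_k)<\infty$, so $\rho_{\rm x}(\phi_1(\xhat_k,\,x_k),\,f\xhat_k)\to 0$, while $V(\xhat_k,\,x_k)$ decreases monotonically to some limit $V_\infty\geq 0$. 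Under the boundedness hypothesis, finite dimensionality delivers a convergent subsequence $(\xhat_{k_j},\,x_{k_j})\to(\xhat^*,\,x^*)$. The continuity clauses of Assumption~\ref{assume:Vcontinuous} together with continuity of $f$ and $\ell$ give $V(\xhat^*,\,x^*)=V_\infty$ and $\phi_1(\xhat^*,\,x^*)=f\xhat^*$, so the closed-loop orbit from $(\xhat^*,\,x^*)$ is $(f^k\xhat^*,\,f^k x^*)$ and lies in the $\omega$-limit set, on which $V\equiv V_\infty$ and $\phi_1(\xi,\,\zeta)=f\xi$.

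The crux is to show $V_\infty=0$, after which $\alpha_3\rho_{\rm x}(\xhat_k,\,x_k)\leq V(\xhat_k,\,x_k)\to 0$ closes the proof. Letting $\phi_i^*:=\phi_i(\xhat^*,\,x^*)$, shifting the optimal trajectory by one step produces $(\phi_1^*,\,\phi_2^*,\,\ldots,\,\phi_N^*,\,f\phi_N^*)$, which is feasible from $(f\xhat^*,\,fx^*)$ with cost exactly $V_\infty$, and hence optimal by the uniqueness clause of Assumption~\ref{assume:Vcontinuous}, forcing $\phi_2^*=\phi_1(f\xhat^*,\,fx^*)=f^2\xhat^*$. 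Iterating this shift-and-match argument along the $\omega$-limit orbit yields $\phi_i^*=f^i\xhat^*$ for $i=0,\,1,\,\ldots,\,N-1$. Finally, from $(f^{N-1}\xhat^*,\,f^{N-1}x^*)$ the candidate $(f^{N-1}\xhat^*,\,f^Nx^*,\,f^{N+1}x^*,\,\ldots,\,f^{2N-1}x^*)$ is feasible with cost $\ell(f^Nx^*,\,f^N\xhat^*)=V_\infty$ and hence optimal; its first post-initial point $f^N x^*$ must, by uniqueness and the $\omega$-limit property, equal $f\cdot f^{N-1}\xhat^*=f^N\xhat^*$. This collapses every stage cost of the optimal trajectory from $(\xhat^*,\,x^*)$ to zero and gives $V_\infty=0$. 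The principal obstacle is precisely this last induction: without injectivity of $f^N$ one cannot deduce $\xhat^*=x^*$ pointwise, so the conclusion $V_\infty=0$ has to be extracted from uniqueness of the minimizer rather than from any pointwise match.
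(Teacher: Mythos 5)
Your proposal is correct and follows essentially the same route as the paper: the same shifted-trajectory argument yields the decrease inequality $V(\xhat^{+},x^{+})\leq V(\xhat,x)-\ell(\xhat^{+},f\xhat)$, stability comes from \eqref{eqn:Vposdef}, and $\overline V=0$ is extracted at an accumulation point of the bounded orbit by combining invariance of the limit set, constancy of $V$ there, and the uniqueness of the minimizing sequence to force $\phi_{i}(\xhat^{*},x^{*})=f^{i}\xhat^{*}$, so that every stage cost vanishes. The only difference is cosmetic bookkeeping at the final stage (you treat $i=N$ separately via the trajectory from $(f^{N-1}\xhat^{*},f^{N-1}x^{*})$, whereas the paper runs the induction directly to $i=N$), and your closing remark about not needing injectivity of $f^{N}$ accurately reflects the paper's reasoning.
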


\begin{proof}
The tracker dynamics being $\xhat^{+}=\phi_{1}(\xhat,\,x)$, we can
write by optimality
\begin{eqnarray}\label{eqn:decrease}
V(\xhat^{+},\,x^{+})-V(\xhat,\,x)\leq-\ell(\phi_{1}(\xhat,\,x),\,f\xhat)
\end{eqnarray}
whence it follows that $V(\xhat_{k},\,x_{k})\leq
V(\xhat_{0},\,x_{0})$ for all $k$. Hence the stability is
established by the assumed positive definiteness
\eqref{eqn:Vposdef} of the optimal cost $V$.

Now we show convergence under the assumption that both $\xhat_{k}$
and $x_{k}$ belong to a bounded region $\D$ for all $k$. By
\eqref{eqn:decrease} we can write $0\leq V(\xhat_{k+1},\,x_{k+1})
\leq V(\xhat_{k},\,x_{k})$ for all $k$. Therefore there exists
${\overline V}\geq 0$ such that $V(\xhat_{k},\,x_{k})\to{\overline
V}$ as $k\to\infty$. Note that establishing the convergence
$\rho_{\rm x}(\xhat_{k},\,x_{k})\to 0$ is equivalent to showing
that ${\overline V}=0$ thanks to \eqref{eqn:Vposdef}.

By $\ex\in\X^{2}$ let us denote the aggregate state $(\xhat,\,x)$.
Then we can write
$\ex^{+}=(\xhat^{+},\,x^{+})=(\phi_{1}(\xhat,\,x),\,fx)=:\ef\ex$.
Since both $f$ and $\phi_{1}$ are continuous, so is
$\ef:\X^{2}\to\X^{2}$. Also, the solution $\ex_{k}$ belongs to the
bounded region $\D^{2}$ for all $k$. Since the sequence
$(\ex_{k})_{k=0}^{\infty}$ is bounded it must have an accumulation
point $\ex^{*}=(\xhat^{*},\,x^{*})$. That is,
$(\ex_{k})_{k=0}^{\infty}$ must have a convergent subsequence
$(\ex_{k_{j}})_{j=0}^{\infty}$ satisfying $\ex_{k_{j}}\to\ex^{*}$
as $j\to\infty$ \cite{browder96}. Note that
$V\ex^{*}=V(\xhat^{*},\,x^{*})={\overline V}$ because $V$ is
continuous. Since $\ex^{+}=\ef\ex$ the sequence
$(\ef\ex_{k_{j}})_{j=0}^{\infty}$ must also be a subsequence of
$(\ex_{k})_{k=0}^{\infty}$. Moreover, $\ef\ex^{*}$ has to be an
accumulation point because $\ef$ is continuous. By induction
$\ef^{q}\ex^{*}$ is an accumulation point of
$(\ex_{k})_{k=0}^{\infty}$ for all $q\in\Natural$. Consequently
\begin{eqnarray}\label{eqn:johnnyguitar1}
V\ef^{q}\ex^{*}={\overline V}
\end{eqnarray}
for all $q\in\Natural$. By \eqref{eqn:decrease} and
\eqref{eqn:johnnyguitar1} we can write
\begin{eqnarray*}
\alpha_{1}\rho_{\rm
x}(\phi_{1}(\xhat^{*},\,x^{*}),\,f\xhat^{*})&\leq&\ell(\phi_{1}(\xhat^{*},\,x^{*}),\,f\xhat^{*})\\
&\leq&
V\ex^{*}-V\ef\ex^{*}\\&=&{\overline V}-{\overline V}\\
&=&0\,.
\end{eqnarray*}
Therefore $\phi_{1}(\xhat^{*},\,x^{*})=f\xhat^{*}$, which means
$\ef\ex^{*}=(f\xhat^{*},\,fx^{*})$. Employing induction we can
thus write
\begin{eqnarray}\label{eqn:johnnyguitar2}
\ef^{q}\ex^{*}=(f^{q}\xhat^{*},\,f^{q}x^{*})
\end{eqnarray}
i.e., $(f^{q}\xhat^{*},\,f^{q}x^{*})$ is an accumulation point.
Hence
\begin{eqnarray}\label{eqn:accumulation}
\phi_{1}(f^{q}\xhat^{*},\,f^{q}x^{*})=f^{q+1}\xhat^{*}
\end{eqnarray}
for all $q\in\Natural$.

Now, given a pair $(\xhat,\,x)$ let a sequence
$(z_{i})_{i=0}^{N}=(\xhat,\,z_{1},\,\ldots,\,z_{N-1},\,f^{N}x)$ be
said to be {\em feasible with respect to $(\xhat,\,x)$} if it
respects the constraints in \eqref{eqn:phi}. Also, we define
\begin{eqnarray*}
J(z_{i})_{i=0}^{N}:=\sum_{i=0}^{N-1}\ell(z_{i+1},\,fz_{i})\,.
\end{eqnarray*}
Then we can write by \eqref{eqn:accumulation}
\begin{eqnarray}\label{eqn:brandnew}
\lefteqn{V(f^{q}\xhat^{*},\,f^{q}x^{*})}\nonumber\\
&&=
J(\phi_{0}(f^{q}\xhat^{*},\,f^{q}x^{*}),\,\ldots,\,\phi_{N}(f^{q}\xhat^{*},\,f^{q}x^{*}))\nonumber\\
&&=
J(f^{q}\xhat^{*},\,f^{q+1}\xhat^{*},\,\phi_{2}(f^{q}\xhat^{*},\,f^{q}x^{*}),\,\ldots,\,\phi_{N-1}
(f^{q}\xhat^{*},\,f^{q}x^{*}),\,f^{N+q}x^{*})\nonumber\\
&&=
J(f^{q+1}\xhat^{*},\,\phi_{2}(f^{q}\xhat^{*},\,f^{q}x^{*}),\,\ldots,\,\phi_{N-1}
(f^{q}\xhat^{*},\,f^{q}x^{*}),\,f^{N+q}x^{*},\,f^{N+q+1}x^{*})\nonumber\\
&&=
J(\phi_{1}(f^{q}\xhat^{*},\,f^{q}x^{*}),\,\ldots,\,\phi_{N}(f^{q}\xhat^{*},\,f^{q}x^{*}),\,f^{N+q+1}x^{*})\,.
\end{eqnarray}
By \eqref{eqn:johnnyguitar1} and \eqref{eqn:johnnyguitar2} we have
$V(f^{q+1}\xhat^{*},\,f^{q+1}x^{*})=V(f^{q}\xhat^{*},\,f^{q}x^{*})$,
which allows us by \eqref{eqn:brandnew} to write
\begin{eqnarray*}
V(f^{q+1}\xhat^{*},\,f^{q+1}x^{*})
&=&J(\phi_{0}(f^{q+1}\xhat^{*},\,f^{q+1}x^{*}),\,\ldots,\,\phi_{N}(f^{q+1}\xhat^{*},\,f^{q+1}x^{*}))\\
&=&J(\phi_{1}(f^{q}\xhat^{*},\,f^{q}x^{*}),\,\ldots,\,\phi_{N}(f^{q}\xhat^{*},\,f^{q}x^{*}),\,f^{N+q+1}x^{*})\,.
\end{eqnarray*}
By \eqref{eqn:accumulation} the sequence
$(\phi_{1}(f^{q}\xhat^{*},\,f^{q}x^{*}),\,\ldots,\,\phi_{N}(f^{q}\xhat^{*},\,f^{q}x^{*}),\,f^{N+q+1}x^{*})$
is feasible with respect to $(f^{q+1}\xhat^{*},\,f^{q+1}x^{*})$.
Therefore the uniqueness condition stated in
Assumption~\ref{assume:Vcontinuous} implies the following equality
of sequences for all $q\in\Natural$
\begin{eqnarray*}
\lefteqn{(\phi_{1}(f^{q}\xhat^{*},\,f^{q}x^{*}),\,\ldots,\,\phi_{N}(f^{q}\xhat^{*},\,f^{q}x^{*}),\,f^{N+q+1}x^{*})}\\
&&=(\phi_{0}(f^{q+1}\xhat^{*},\,f^{q+1}x^{*}),\,\ldots,\,\phi_{N}(f^{q+1}\xhat^{*},\,f^{q+1}x^{*}))
\end{eqnarray*}
whence we can write
$\phi_{i}(f^{q}\xhat^{*},\,f^{q}x^{*})=f^{q+i}\xhat^{*}$ for
$i=1,\,\ldots,\,N$. That means we have
$\phi_{i+1}(f^{q}\xhat^{*},\,f^{q}x^{*})=f\phi_{i}(f^{q}\xhat^{*},\,f^{q}x^{*})$
for $i=0,\,\ldots,\,N-1$. Finally, by \eqref{eqn:johnnyguitar1}
and \eqref{eqn:johnnyguitar2}
\begin{eqnarray*}
{\overline V}&=&V(f^{q}\xhat^{*},\,f^{q}x^{*})\\
&=&\sum_{i=0}^{N-1}\ell(\phi_{i+1}(f^{q}\xhat^{*},\,f^{q}x^{*}),\,f\phi_{i}(f^{q}\xhat^{*},\,f^{q}x^{*}))\\
&=&\sum_{i=0}^{N-1}\ell(f\phi_{i}(f^{q}\xhat^{*},\,f^{q}x^{*}),\,f\phi_{i}(f^{q}\xhat^{*},\,f^{q}x^{*}))\\
&\leq&\sum_{i=0}^{N-1}\alpha_{2}\rho_{\rm x}(f\phi_{i}(f^{q}\xhat^{*},\,f^{q}x^{*}),\,f\phi_{i}(f^{q}\xhat^{*},\,f^{q}x^{*}))\\
&=&0
\end{eqnarray*}
which was to be shown.
\end{proof}

\begin{remark}
For the special (yet important) case where the trajectory of the
system~\eqref{eqn:nonlinearcon} to be tracked is constant, i.e.,
$x_{k}=x_{\rm eq}$ for all $k$, the boundedness condition required
in Theorem~\ref{thm:asycon} to establish convergence $\rho_{\rm
x}(\xhat_{k},\,x_{k})\to 0$ need not be explicitly assumed for it
is implied by \eqref{eqn:Vposdef} and \eqref{eqn:decrease}. In
other words, to establish the regulation of an equilibrium point
$x_{\rm eq}=fx_{\rm eq}$ Assumption~\ref{assume:Vcontinuous} is
sufficient.
\end{remark}

\bibliographystyle{plain} \bibliography{references}
\end{document}